\theoremstyle{plain}
\newtheorem{thm}{Theorem}[section]
\newtheorem{prop}[thm]{Proposition}
\newtheorem{lem}[thm]{Lemma}
\newtheorem{cor}[thm]{Corollary}
\begin{document}

\title[Central ideals and Jacobson radicals of blocks]{Central ideals and Jacobson radicals of blocks of group algebras}
\author[Y. Otokita]{Yoshihiro Otokita}
\address{Yoshihiro Otokita: \newline Department of Mathematics and Informatics \newline Graduate School of Science \newline Chiba University \newline Japan}
%\subjclass[2010]{20C05, 20C20}

\maketitle

\begin{abstract}
We study the relationship between the central ideals and the Jacobson radicals of blocks of group algebras. In particular, we characterize the blocks with the property that the square of the radical is a non-zero central ideal. Moreover, we also consider the blocks with the property that the cube of the radical becomes a non-zero central ideal. These are improvements of a result by K\"{u}lshammer in 2020.  
\end{abstract}

\section{Introduction}
 Throughout this paper we let $G$ be a finite group and $k$ an algebraically closed field of characteristic $p>0$. One of the important problems in modular representation theory is to clarify the Loewy structure of a block $B$ of the group algebra $kG$. Several papers have tried to classify the blocks with the property that the $n$ th power of the Jacobson radical $J(B)$ of $B$ becomes a zero ideal for $n=1, 2, 3, 4$ (see \cite{O} and \cite{KKS}). On the other hand, $B$ has full defect and is isomorphic to the principal block of $kG$ if $B$ has a non-zero central ideal, that is, a non-zero ideal contained in the center of $B$ (see {\cite[Proposition 4.1]{Ku}}), and it is well-known that the structure of the principal block correlates strongly with the property of $G$. In \cite{Ku}, K\"{u}lshammer has proved the following theorem:
 
 \begin{quotation}
\ \textit{$J(B)$ is central, but not a zero ideal if and only if $B$ has a 1-dimensional module and $G$ is a $p$-nilpotent group with non-trivial abelian Sylow $p$-subgroups.}
 \end{quotation}
 
The main purpose of this paper is to improve this result. More precisely, we characterize the blocks with the property that the square of the radical is a non-zero central ideal. Moreover, we determine the blocks with the property that the cube of the radical becomes central when $p$ is odd.

\section{Lemmas}
 In this section, let $A$ be an arbitrary finite-dimensional $k$-algebra with center $Z(A)$ and Jacobson radical $J(A)$. We denote by $L(A)$ the Loewy length of $A$ and we define the commutator subspace of $A$ by
 \[[A, A] := \{ xy-yx \mid x, y \in A\}.\]
Then the following holds.

\begin{lem}\label{Lem1}{\cite[Remark 2.2]{Ku}} $A[A, A]$ is an ideal of $A$.
\end{lem}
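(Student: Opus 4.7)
The plan is to verify that $M := A[A,A]$ is closed under both left and right multiplication by elements of $A$; additive and $k$-linear closure is built into the definition of the product. Left-closure is immediate: for any $a, x, y \in A$ and any $b \in A$ one has $b\cdot a(xy-yx) = (ba)(xy-yx) \in A[A,A]$, and this extends to arbitrary $k$-linear combinations.

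The substantive step is to show closure under right multiplication, and for this I would invoke the commutator rewriting
\[
(xy - yx)b \;=\; (xyb - ybx) \;-\; (yxb - ybx) \;=\; [x, yb] \;-\; y[x, b],
\]
which holds for all $x, y, b \in A$. Multiplying on the left by $a \in A$ then yields
\[
a(xy-yx)\,b \;=\; a\,[x, yb] \;-\; (ay)\,[x, b] \;\in\; A[A,A],
\]
and linearity finishes the argument.

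I expect essentially no obstacle here: the only point that requires a moment of thought is spotting the rewriting $(xy-yx)b = [x,yb] - y[x,b]$, which converts a right-multiplied commutator into an $A$-linear combination of commutators. Once that identity is in hand, right-closure follows with no further machinery, and the lemma is established.
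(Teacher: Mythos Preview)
Your argument is correct: the identity $(xy-yx)b=[x,yb]-y[x,b]$ is valid, and with it right-closure of $A[A,A]$ follows immediately, left-closure being trivial.

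The paper itself gives no proof of this lemma; it merely records the statement with a reference to K\"ulshammer \cite[Remark~2.2]{Ku}. So your write-up is not so much a different route as an explicit verification of a fact the paper is content to quote. What you gain is self-containment: a reader need not chase the reference to see why the statement holds, and your commutator rewriting is exactly the standard trick one would expect. There is nothing to correct.
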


Here, we moreover assume that $A$ is a symmetric algebra with symmetrizing linear form $\lambda : A \to k$. For a subspace $X$ of $A$,

\[ X^{\perp} := \{a \in A \mid \lambda(aX)=0 \}\] \

is a subspace of $A$, $(X^{\perp})^{\perp}=X$ and $\dim X + \dim X^{\perp} = \dim A$. If $I$ is an ideal of $A$, then so is $I^{\perp}$. Moreover, it is known that $[A, A]^{\perp}=Z(A)$ and the socle $S(A)$ of $A$ equals $J(A)^{\perp}$. 

 We now prove a lemma about the central ideals of $A$ and the dimensions of $A$-modules.(In this paper, an \textit{$A$-module} always means a finite-generated right $A$-module.) A part of this lemma is due to K\"{u}lshammer {\cite[Remark 2.2 (ii)]{Ku}}. 
 
\begin{lem}\label{Lem2} Let $A$ be a finite-dimensional symmetric $k$-algebra. Then the following are equivalent:
\begin{enumerate}
\item $A$ has a non-zero central ideal, that is, a non-zero ideal contained in $Z(A)$;
\item $A$ has a 1-dimensional module;
\item $A \neq A[A, A]$.
\end{enumerate}
\end{lem}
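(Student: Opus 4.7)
The plan is to prove the chain $(1) \Leftrightarrow (3)$ by using the $\perp$-duality coming from the symmetric structure, and $(2) \Leftrightarrow (3)$ by standard commutative algebra over the algebraically closed base field $k$.

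For $(1) \Leftrightarrow (3)$ I would identify $(A[A,A])^{\perp}$ as the sum of all central ideals of $A$. Since $A[A,A]$ is an ideal by Lemma~\ref{Lem1} and the orthogonal of an ideal is an ideal, $(A[A,A])^{\perp}$ is automatically an ideal. The main computation is to show it is central and contains every central ideal. Using that $\lambda$ is symmetric (so $\lambda(uv)=\lambda(vu)$), for any $a\in A$, $x\in A$ and $t\in[A,A]$ one has $\lambda(a\cdot xt) = \lambda(t\cdot ax)$. Therefore $a\in (A[A,A])^{\perp}$ if and only if $\lambda(t\cdot ax)=0$ for all $x\in A$ and $t\in[A,A]$, equivalently $aA\subseteq [A,A]^{\perp}=Z(A)$. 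Taking $x=1$ gives $a\in Z(A)$, so $(A[A,A])^{\perp}\subseteq Z(A)$, i.e., it is a central ideal. Conversely, if $I$ is any central ideal and $a\in I$, then $aA\subseteq I\subseteq Z(A)$, so $a\in (A[A,A])^{\perp}$. Thus $(A[A,A])^{\perp}$ is the largest central ideal of $A$, and $A$ has a non-zero central ideal if and only if $(A[A,A])^{\perp}\neq 0$, which by the dimension formula $\dim A[A,A]+\dim (A[A,A])^{\perp}=\dim A$ is equivalent to $A[A,A]\neq A$.

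For $(2) \Leftrightarrow (3)$ I would use that a $1$-dimensional right $A$-module is the same as a $k$-algebra homomorphism $\varphi\colon A\to k$. If such a $\varphi$ exists, then $\varphi([A,A])=0$ since $k$ is commutative, hence $\varphi(A[A,A])=0$ and $A[A,A]\subsetneq A$. Conversely, if $A[A,A]\neq A$, then $A/A[A,A]$ is a non-zero finite-dimensional $k$-algebra which is commutative (because $[A,A]\subseteq A[A,A]$); since $k$ is algebraically closed, it admits a quotient isomorphic to $k$, and the composition $A\to A/A[A,A]\to k$ supplies the required $1$-dimensional module.

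The only real obstacle is the orthogonal computation identifying $(A[A,A])^{\perp}$ with the maximal central ideal; once the cyclicity of $\lambda$ is invoked this reduces to the known identity $[A,A]^{\perp}=Z(A)$, and the rest is essentially formal. The implication $(3)\Rightarrow(2)$ is where the hypothesis that $k$ is algebraically closed (already in force throughout the paper) is crucial.
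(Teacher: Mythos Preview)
Your argument is correct, and its overall strategy---using the $\perp$-duality for the link between (1) and (3), and the kernel of a one-dimensional representation for $(2)\Rightarrow(3)$---is the same as the paper's. The organization differs in two minor but noteworthy ways. First, for $(3)\Rightarrow(1)$ the paper takes the shorter route: since $[A,A]\subseteq A[A,A]$, one immediately gets $(A[A,A])^{\perp}\subseteq [A,A]^{\perp}=Z(A)$, and this orthogonal is an ideal by Lemma~\ref{Lem1}; your computation via $\lambda(axt)=\lambda(t\cdot ax)$ recovers this and in fact yields the sharper statement that $(A[A,A])^{\perp}$ is the \emph{largest} central ideal, which is a nice bonus. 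Second, the paper closes the cycle by quoting K\"{u}lshammer \cite[Remark~2.2~(ii)]{Ku} for $(1)\Rightarrow(2)$, whereas you supply a self-contained $(3)\Rightarrow(2)$ via the commutative quotient $A/A[A,A]$ and the hypothesis that $k$ is algebraically closed; this makes your proof independent of the external reference at the cost of using that hypothesis explicitly.
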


\begin{proof}
By {\cite[Remark 2.2 (ii)]{Ku}}, (1) implies (2). Suppose (2) holds. Then there exists an algebra homomorphism $\rho : A \to k$ afforded by the 1-dimensional module. Since the kernel $\ker \rho$ of $\rho$ is a proper ideal of $A$ contains $[A, A]$, we have $A[A, A] \subseteq \ker \rho \neq A$ and (3) holds. If (3) holds, then $[A, A] \subseteq A[A, A] \neq A$. Hence $0=A^{\perp}\neq (A[A, A])^{\perp} \subseteq [A, A]^{\perp} =Z(A)$ and we are in (1) by Lemma \ref{Lem1}. 
\end{proof}

 In the following, we consider the central ideals in group algebras. As mentioned in the previous section, let $kG$ be the group algebra of a finite group $G$ over an algebraically closed field $k$ of characteristic $p>0$. Then $kG$ and its blocks are symmetric algebras. In the next lemma, we denote by $G'$ the commutator subgroup of $G$ and by ${X}^{+}$ the sum of all elements in a subset $X$ of $G$.  
 
\begin{lem}\label{Lem3}{\cite[Lemma 2.4]{W}} For an ideal $I$ in a block $B$ of $kG$, the following are equivalent:
\begin{enumerate}
\item $I$ is central;
\item $xg=x$ for all $x \in I$ and $g \in G'$;
\item $I \subseteq B \cdot (G')^{+}$.
\end{enumerate}
\end{lem}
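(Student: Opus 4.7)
The plan is to establish the cycle $(1) \Rightarrow (2) \Rightarrow (3) \Rightarrow (1)$. Two elementary identities in $kG$ will carry most of the weight: for $g,h \in G$,
\[
gh - hg = \bigl(ghg^{-1}h^{-1} - 1\bigr)\cdot hg,
\]
and for every $c \in G'$, $(c-1)(G')^+ = 0$ (since right multiplication by $c$ permutes $G'$).

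For $(1) \Rightarrow (2)$, the crucial observation is that $I$ is a two-sided ideal, not merely a central subspace. For $x \in I$ and any $b \in B$, the product $bx$ again lies in $I \subseteq Z(B)$; combining $bx \in Z(B)$ with $x \in Z(B)$ forces $[b,c]\,x = 0$ for all $b,c \in B$. Taking $b = ge$, $c = he$ and absorbing the block idempotent via $ex = x$, this upgrades to $(gh - hg)\,x = 0$ in $kG$ for all $g,h \in G$. Pre-multiplying by $(hg)^{-1}$ gives $(g^{-1}h^{-1}gh - 1)\,x = 0$, so $c'\,x = x$ for every commutator $c'$; an induction on commutator length in $G'$ then extends this to $c\,x = x$ for all $c \in G'$, and centrality of $x$ yields $xc = cx = x$.

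For $(2) \Rightarrow (3)$, the hypothesis $xc = x$ for all $c \in G'$ forces the coefficients of $x \in kG$ to be constant on left cosets of $G'$, so $x \in kG\cdot(G')^+$; since also $x = xe$, we get $x \in kG\cdot(G')^+ e = B\cdot(G')^+$. For $(3) \Rightarrow (1)$, it suffices to show $B\cdot(G')^+ \subseteq Z(B)$: for $x = b(G')^+$ and $y \in B$, centrality of $(G')^+$ in $kG$ gives $xy - yx = [b,y](G')^+$, and the identity $(c-1)(G')^+ = 0$, applied to each spanning commutator $gh - hg$ of $[kG,kG]$ via the first displayed identity, yields $[kG,kG]\cdot(G')^+ = 0$.

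The main obstacle is $(1) \Rightarrow (2)$: the naive use of ``central'' gives only $xg = gx$, not the stronger $xg = x$, and one must bootstrap via ideal-hood to the annihilation $[B,B]\,x = 0$ and then translate ring commutators into the group-theoretic setting via the first displayed identity.
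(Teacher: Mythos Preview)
Your argument is correct. The cycle $(1)\Rightarrow(2)\Rightarrow(3)\Rightarrow(1)$ goes through exactly as you outline: the key step $(1)\Rightarrow(2)$ is handled by the observation that for a central ideal one has $[B,B]\,I=0$ (not merely $[x,B]=0$ for $x\in I$), and your translation of this into $(g^{-1}h^{-1}gh-1)x=0$ via left multiplication by $(hg)^{-1}$ is clean. One cosmetic slip: in the parenthetical justifying $(c-1)(G')^{+}=0$ you write ``right multiplication by $c$'', but the expression $c\cdot(G')^{+}$ involves \emph{left} multiplication by $c\in G'$; of course either side works and the conclusion is unaffected.

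As for comparison with the paper: the paper does not actually prove this lemma. It attributes the statement to Wallace \cite{W} and for the proof simply writes ``This follows from {\cite[Chapter III, Lemma 13.2]{Ka}}'', deferring entirely to Karpilovsky's book. Your write-up is therefore more than the paper offers---a complete, self-contained, elementary argument in place of a bare citation. The route you take (reducing to $[kG,kG]\cdot(G')^{+}=0$ via the group identity $gh-hg=(ghg^{-1}h^{-1}-1)hg$) is the standard one, and is essentially what lies behind the cited references; what you gain is that a reader need not chase the literature.
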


\begin{proof}
This follows from {\cite[Chapter III, Lemma 13.2]{Ka}}.
\end{proof}

 By K\"{u}lshammer {\cite[Proposition 4.1]{Ku}}, a block $B$ of $kG$ has full defect and is isomorphic to the principal block of $kG$ whenever $B$ has a non-zero central ideal. In this case, the property of $G$  as a group is affected by the structure of $B$ as an algebra. In the proposition below, we see the relationship among the Jacobson radical $J(B)$ of $B$, a Sylow $p$-subgroup $P$ of $G$ and the focal subgroup $P^{*}:=P \cap G'$. 

\begin{prop}\label{Prop4} Let $n$ be a positive integer, $B$ a block of $kG$, $P$ a Sylow $p$-subgroup of $G$, and set $P^{*}=P \cap G'$. If $|P^{*}| \neq 1$ and $J(B)^{n}$ is a non-zero central ideal in $B$, then $L(k(P/P^{*})) \le n$.
\end{prop}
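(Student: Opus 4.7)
The plan is to construct a surjective algebra homomorphism $\pi : B \to k(P/P^*)$ and push the containment $J(B)^n \subseteq B \cdot (G')^{+}$ (given by Lemma~\ref{Lem3}) through $\pi$; the arithmetic heart of the argument will be the vanishing $\pi((G')^{+}) = 0$.

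First I would apply Lemma~\ref{Lem2} together with \cite[Proposition 4.1]{Ku} to reduce to the case $B = B_0(kG)$, the principal block: the non-zero central ideal $J(B)^n$ forces $B$ to be isomorphic to $B_0(kG)$, and the conclusion is an algebra invariant. Exploiting that $G/G'$ is abelian with Sylow $p$-subgroup naturally isomorphic to $P/P^*$, I would form the composite surjection $\pi : kG \to k(G/G') \to k(P/P^*)$. The augmentation $kG \to k$ factors through $\pi$, so $\pi(e_0)$ is an idempotent of the local algebra $k(P/P^*)$ with augmentation $1$; hence $\pi(e_0)=1$, and by centrality of $e_0$ one gets $\pi(B)=\pi(e_0)\pi(kG)=k(P/P^*)$, i.e.\ $\pi|_B$ is surjective.

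Now applying $\pi$ to the inclusion from Lemma~\ref{Lem3} gives $\pi(J(B)^n) \subseteq k(P/P^*) \cdot \pi((G')^{+})$. Since every $g \in G'$ maps to $1$ under $\pi$, one computes $\pi((G')^{+}) = |G'| \cdot 1$ in $k(P/P^*)$. The hypothesis $|P^*|\neq 1$ makes $P^*$ a non-trivial $p$-subgroup of $G'$, so $p \mid |G'|$, and hence $\pi((G')^{+})=0$. Using the standard fact that a surjective algebra homomorphism of finite-dimensional algebras carries radical onto radical, I obtain $J(k(P/P^*))^n = \pi(J(B))^n = \pi(J(B)^n) = 0$, which is precisely $L(k(P/P^*)) \le n$.

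The main obstacle is the set-up: identifying the correct target $k(P/P^*)$ (rather than the larger $k(G/G')$) and verifying that $\pi|_B$ is surjective so that $\pi(J(B))=J(k(P/P^*))$. Once these points are in place, the proof collapses into the single arithmetic observation $p \mid |G'|$.
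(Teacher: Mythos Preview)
Your argument is correct and follows essentially the same route as the paper: reduce to the principal block via \cite[Proposition 4.1]{Ku}, use Lemma~\ref{Lem3} to get $J(B)^n \subseteq B\cdot (G')^{+}$, and exploit $p \mid |G'|$ so that $(G')^{+}$ dies under the map to the abelian quotient. The only organizational difference is that the paper aggregates over \emph{all} blocks of $kG$ to obtain $J(kG)^n \subseteq kG[kG,kG]$ and then passes through $kG/kG[kG,kG]\simeq k(G/G')$, whereas you work directly with $B$ and verify that $\pi|_B$ surjects onto $k(P/P^{*})$; your version is slightly more economical but the content is the same.
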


\begin{proof}
Put $I:=kG[kG, kG]$. Then $kG/I \simeq k(G/G')$ (see {\cite[Section 4]{Ku}}). Let $A$ be a block of $kG$ and assume $A \neq A[A, A]$. Then, by Lemma \ref{Lem2}, $A$ and $B$ are isomorphic to the principal block of $kG$ and $J(A)^{n}$ is a non-zero central ideal of $A$. Hence $J(A)^{n}$ is contained in $A \cdot (G')^{+}$ (see Lemma \ref{Lem3}). Since the augmentation of $(G')^{+}$ is $|G'|=0$ in $k$, $J(A)^{n} \subseteq I$ (see {\cite[Section 4]{Ku}}) and thus $J(kG)^{n} \subseteq I$. Since $kG/I$ has Jacobson radical $J(kG)+I/I$, this implies $L(k(G/G')) \le n$. Since $G/G'$ is an abelian group with Sylow $p$-subgroup $P/P^{*}$, the claim follows. 
\end{proof}

The Loewy length of $k(P/P^{*})$ is completely determined by the type of $P/P^{*}$. In particular, if $P/P^{*}$ has order $p^{r}$, then $r(p-1)+1 \le L(k(P/P^{*}))$ with equality if and only if $P/P^{*}$ is elementary. 

 We next consider the covering of blocks. 
 
 \begin{lem}\label{Lem5} Let $n$ be a positive integer, $B$  a block of $kG$ and $b$ a block of $kH$ covered by $B$, where $H$ is a normal subgroup of $G$. 
 \begin{enumerate}
 \item If $J(B)^{n}$ is a central ideal of $B$, then $J(b)^{n}$ is a central ideal of $b$.
 \item {\cite[(4.1) Lemma]{KM}} If $|G:H|$ is not divisible by $p$, then $L(B)=L(b)$.
 \end{enumerate}
 \end{lem}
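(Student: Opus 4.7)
Part (2) is a standard fact due to Kn\"orr and Michler, so I would simply cite it. For part (1), the plan is to proceed in three steps.

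First, since $H$ is normal in $G$, the Jacobson radical $J(kH)$ is $G$-stable and nilpotent, so it is a nilpotent two-sided ideal of $kG$ and hence $J(kH) \subseteq J(kG)$. The central idempotent $e_B \in Z(kG)$ commutes with $e_b \in Z(kH) \subseteq kG$, so combining these facts yields
\[ e_B J(b)^n \;=\; e_B\, e_b\, J(kH)^n \;\subseteq\; e_B\, J(kG)^n \;=\; J(B)^n. \]
In particular every element of the form $e_B x$ with $x \in J(b)^n$ lies in $J(B)^n$.

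Second, I would apply the hypothesis. For $x \in J(b)^n$ and $y \in b$, the centrality and idempotence of $e_B$ give
\[ e_B(xy - yx) \;=\; (e_Bx)(e_By) - (e_By)(e_Bx) \;=\; [e_Bx,\,e_By], \]
a commutator inside $B$. Since $e_Bx \in J(B)^n \subseteq Z(B)$ by hypothesis, this commutator vanishes, and thus $e_B(xy - yx) = 0$ in $kG$.

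Third, I would deduce that $xy - yx = 0$ inside $b$. In the favourable case where $b$ is $G$-invariant and $B$ is the unique block of $kG$ covering $b$, we have $e_B = e_b$, so $e_b$ acts as the identity on $b$ and the conclusion is immediate; applying Lemma \ref{Lem3} to $b$ then gives $J(b)^n \subseteq Z(b)$. The general case reduces to this situation via the Fong--Reynolds correspondence: the inertia group $T = G_b$ sits between $H$ and $G$, and Fong--Reynolds provides a Morita equivalence between $B$ and a block of $kT$ covering $b$, which preserves the property that $J^n$ is central. A further analysis of the crossed-product structure of $e_bkG$ handles the possibility that several blocks of $kG$ cover $b$.

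The main obstacle is the third step: bridging from ``$e_B$ annihilates the commutator $xy-yx$'' to ``the commutator vanishes in $b$'' is nontrivial precisely because multiplication by $e_B$ need not be injective on $b$ when more than one block of $kG$ covers $b$. This is where the covering structure (via Fong--Reynolds, or equivalently via the decomposition of $e_bkG$) must be invoked in full.
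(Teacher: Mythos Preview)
Your first two steps are correct, and you have honestly identified the real obstacle: from $e_B(xy-yx)=0$ you cannot conclude $xy-yx=0$ in $b$ unless $x\mapsto e_Bx$ is injective on $b$, and it generally is not. Your proposed repair does not close the gap. Fong--Reynolds reduces to the case where $b$ is $G$-stable, but in that case $e_b\in Z(kG)$ decomposes as $e_b=e_{B_1}+\cdots+e_{B_t}$ with $B_1=B$, and $t>1$ is perfectly possible (e.g.\ $H=1$ and $G$ a nontrivial $p'$-group). So ``$G$-invariant'' does not imply ``$e_B=e_b$'', and the hypothesis says nothing about $J(B_i)^n$ for $i\ge 2$. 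The phrase ``a further analysis of the crossed-product structure'' is not an argument; as written, the proof is incomplete precisely at the point you flagged.

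The paper avoids the injectivity problem entirely by replacing the idempotent calculation with a module-theoretic one, using Lemma~\ref{Lem3}. That lemma says an ideal $I$ of $B$ is central iff $G'$ acts trivially on $I$, and likewise for $b$ with $H'$ in place of $G'$. Now take a primitive idempotent $f\in b$; since $B$ covers $b$, the PIM $fb$ is a direct summand of $eB{\downarrow}_H$ for some primitive idempotent $e\in B$. Then $fJ(b)^n=fbJ(kH)^n$ embeds, as a $kH$-submodule, into $eBJ(kH)^n\subseteq J(B)^n$. By hypothesis $G'$ acts trivially on $J(B)^n$, hence $G'\cap H$ (and in particular $H'$) acts trivially on $fJ(b)^n$. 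Summing over $f$ gives $J(b)^n\subseteq Z(b)$ by Lemma~\ref{Lem3}. The point is that the embedding of \emph{modules} is honest even when multiplication by $e_B$ on the \emph{algebra} $b$ is not injective; this is what your approach is missing.
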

  
\begin{proof}
Remark that $J(kH) \subseteq J(kG)$ since $H$ is normal in $G$. Let $f$ be a primitive idempotent in $b$. Then $fb$ is a principal indecomposable $b$-module. Since $B$ covers $b$, $fb$ is isomorphic to a direct summand of $eB\downarrow_{H}$ for some primitive idempotent $e$ in $B$ and $eBJ(kH)^{n} \subseteq eBJ(kG)^{n} \subseteq J(B)^{n}$. Since we can identify $fJ(b)^{n}=fbJ(kH)^{n}$ with a $kH$-submodule of $eBJ(kH)^{n}$, $G' \cap H$ acts trivially on $fJ(b)^{n}$ by Lemma \ref{Lem3}. Hence we have $J(b)^{n} = \bigoplus fJ(b)^{n} \subseteq Z(b)$ as $H' \subseteq G' \cap H$.
\end{proof}

 We next consider the Loewy structures of principal indecomposable modules. For a $B$-module $U$, we denote by $L(U)$ the Loewy length of $U$. 
 
\begin{prop}\label{Prop6} Let $B$ be a block of $kG$ and $b$ a block of $kG'$ covered by $B$. Moreover, let $U_{0}$ be the principal indecomposable $kG'$-module afforded by the trivial module and $U$ be an arbitrary $b$-module. If $J(B)^{n}$ is a non-zero central ideal of $B$ for some positive integer $n$, then 
\begin{equation*}
L(U) \le \begin{cases}
                  n+1 & (U \simeq U_{0}) \\
                  n & (U \not\simeq U_{0})
             \end{cases}
\end{equation*}
In particular, if $|G:G'|$ is not divisible by $p$, then $L(U_{0})=n+1$.
\end{prop}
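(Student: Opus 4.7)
My plan is to reduce the problem to the principal indecomposable modules (PIMs) of the block $b$ and then separate the analysis according to whether the top of a PIM is trivial. First I would observe that since $J(B)^n$ is a non-zero central ideal, \cite[Proposition~4.1]{Ku} forces $B$ to be the principal block of $kG$; principal blocks cover principal blocks, so $b$ is the principal block of $kG'$, and the trivial $kG'$-module $k$ is simple in $b$ with projective cover $U_0$. Lemma~\ref{Lem5}(1) then yields $J(b)^n\subseteq Z(b)$.

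Next I would revisit the argument inside the proof of Lemma~\ref{Lem5}(1) in the special case $H=G'$: for each primitive idempotent $f\in b$, the embedding $fJ(b)^n\hookrightarrow eBJ(kG')^n\subseteq J(B)^n$ together with Lemma~\ref{Lem3} applied to the central ideal $J(B)^n$ of $B$ shows that $G'\cap G'=G'$ acts trivially on $fJ(b)^n$. Hence $fJ(b)^n$ is a direct sum of copies of the trivial module, and $fJ(b)^{n+1}=0$ because $J(b)$ annihilates $k$, already giving $L(fb)\le n+1$ for every PIM. To sharpen this when the top of $fb$ is non-trivial, I would invoke that $b$ is symmetric with identity Nakayama permutation, so $\operatorname{soc}(fb)$ coincides with the simple top of $fb$; if that top is some $S\ne k$, then every non-zero submodule of the PIM $fb$ must meet $\operatorname{soc}(fb)=S$, but $fJ(b)^n$ is a sum of copies of $k\ne S$, forcing $fJ(b)^n=0$ and $L(fb)\le n$. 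Thus only the PIM $U_0$ can have Loewy length exceeding $n$. For an arbitrary indecomposable $b$-module $U$ with simple top $S$, $U$ is a quotient of $P(S)$: if $S\ne k$ this transfers the bound to give $L(U)\le n$, and if $S=k$ then $U$ is a quotient of $U_0$, so $L(U)\le n+1$, with equality forcing $U\simeq U_0$ since proper quotients strictly shrink the Loewy length.

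For the ``in particular'' clause, I would combine Lemma~\ref{Lem5}(2) with $p\nmid|G:G'|$ to conclude $L(B)=L(b)$; the assumption $J(B)^n\ne 0$ yields $L(B)\ge n+1$, whence $L(b)\ge n+1$. Since $b$ is self-injective (being symmetric), $L(b)$ equals the maximum of the Loewy lengths of its PIMs, and by the previous step only $U_0$ can exceed $n$ among these, so the maximum must be attained at $U_0$, giving $L(U_0)\ge n+1$ and therefore $L(U_0)=n+1$. The main obstacle is the symmetric-algebra socle step: one must invoke explicitly that the Nakayama permutation of a block of a group algebra is trivial in order to identify $\operatorname{soc}(P(S))=S$, and then argue that the sum-of-trivials submodule $fJ(b)^n$ cannot contain a non-trivial simple $S$ sitting in this socle.
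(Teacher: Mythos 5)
Your proposal is correct and takes essentially the same route as the paper: you extract from the proof of Lemma \ref{Lem5} (via Lemma \ref{Lem3}) that $G'$ acts trivially on $fJ(b)^{n}$, use the symmetric-algebra property (simple socle isomorphic to the top of a principal indecomposable) to force $fJ(b)^{n}=0$ unless $fb\simeq U_{0}$, and deduce the final equality from Lemma \ref{Lem5}(2) together with $n<L(B)$. The only differences are cosmetic: you make explicit the socle/Nakayama-permutation step and the reduction to principal blocks, which the paper's proof leaves implicit.
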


\begin{proof}
Set $J=J(b)$. Let $f$ be a primitive idempotent in $b$ such that $fb \simeq U$. By the proof of Lemma \ref{Lem5}, $G'$ acts trivially on $fJ^{n}$. This implies $fJ^{n}=0$, or $fJ^{n}$ is isomorphic to the trivial $kG'$-module. Hence the first claim follows. The second claim is due to Lemma \ref{Lem5} (2) and $n < L(B)$.  
\end{proof}

 At the end of this section, we consider the case that $G$ is a direct product of two groups.

\begin{lem}\label{Lem7} Assume $G=H \times Q$, where $H$ is a subgroup of $G$ and $Q$ is a $p$-subgroup of $G$. Let $B$ be a block of $kG$ and $b$ a block of $kH$ covered by $B$. Moreover, $n$ and $r$ are positive integers such that $n \ge r$. If $J(B)^{n}$ is a non-zero central ideal of $B$ and $J(kQ)^{r}$ is not a zero ideal, then $J(b)^{n-r}$ is central in $b$.
\end{lem}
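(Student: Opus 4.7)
The plan is to exploit the direct-product structure $G = H \times Q$ to reduce the question to a tensor-product computation. Since $Q$ is a $p$-group, the group algebra $kQ$ is local (its augmentation ideal is the unique maximal ideal), so its center contains no nontrivial idempotents. Under the natural identification $kG \cong kH \otimes_k kQ$, the primitive central idempotents of $kG$ are therefore exactly $e \otimes 1$ with $e$ running through the primitive central idempotents of $kH$. Consequently, the block $B$ of $kG$ that covers $b$ is, via this identification, equal to $b \otimes_k kQ$.

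Next I would describe $J(B)^n$ on the tensor side. Because $k$ is algebraically closed, $J(B) = J(b) \otimes kQ + b \otimes J(kQ)$. Write $I_1 = J(b) \otimes kQ$ and $I_2 = b \otimes J(kQ)$; these are two-sided ideals of $B$ satisfying $I_1 I_2 = J(b) \otimes J(kQ) = I_2 I_1$. Since $I_1$ and $I_2$ commute multiplicatively, we may expand
\[
J(B)^n = (I_1 + I_2)^n = \sum_{i + j = n} I_1^i I_2^j = \sum_{i + j = n} J(b)^i \otimes J(kQ)^j.
\]
Taking $i = n - r$ and $j = r$ yields the inclusion $J(b)^{n-r} \otimes J(kQ)^r \subseteq J(B)^n \subseteq Z(B)$.

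To finish, I would invoke the hypothesis $J(kQ)^r \neq 0$ and fix some $0 \neq x \in J(kQ)^r$. For any $y \in J(b)^{n-r}$ the element $y \otimes x$ lies in $Z(B)$, so it commutes with $w \otimes 1$ for every $w \in b$. Expanding both products gives $(yw - wy) \otimes x = 0$ in $b \otimes_k kQ$, and since $x \neq 0$ this forces $yw = wy$. Hence $J(b)^{n-r} \subseteq Z(b)$, which is exactly the desired centrality in $b$.

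The only steps that need genuine care are the identification $B = b \otimes_k kQ$ from the covering relation (which relies on the locality of $kQ$) and the power formula for $J(B)^n$ above; both are standard consequences of $kQ$ being local over an algebraically closed field, and the remainder of the argument is a short tensor-product manipulation.
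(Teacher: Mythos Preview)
Your proof is correct and follows essentially the same route as the paper's: both identify $B$ with $b \otimes_k kQ$ (equivalently, note that the block idempotents of $B$ and $b$ coincide), expand $J(B)^n$ via the tensor-radical formula to obtain $J(b)^{n-r} \otimes J(kQ)^r \subseteq J(B)^n \subseteq Z(B)$, and then cancel a nonzero factor coming from $kQ$ to deduce $J(b)^{n-r} \subseteq Z(b)$. The only cosmetic difference is that the paper works multiplicatively inside $kG$ and chooses the specific element $Q^{+}$ (using $S(kQ)=kQ^{+}\subseteq J(kQ)^r$) in place of your arbitrary nonzero $x\in J(kQ)^r$.
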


\begin{proof}
The block idempotents of $B$ and $b$ are the same since $Q$ is a $p$-group. By \cite{L} or \cite{M}, $J(kG)^{n}=\sum_{i=0}^{n}J(kH)^{n-i}J(kQ)^{i} \supseteq J(kH)^{n-r} J(kQ)^{r} \supseteq J(kH)^{n-r} \cdot kQ^{+}$ since $S(kQ)=kQ^{+}$. Hence we obtain $J(b)^{n-r} \cdot kQ^{+} \subseteq J(B)^{n} \subseteq Z(B)$. Let $x \in J(b)^{n-r}$ and $y \in b$. Then $xyQ^{+}=xQ^{+}y=yxQ^{+}$ and $(xy-yx)Q^{+}=0$. Since $G=H \times Q$, this implies $xy-yx=0$. Therefore we deduce $J(b)^{n-r} \subseteq Z(b)$ as claimed.
\end{proof}

\section{Main results}

We prove two main theorems of this paper. In the first theorem below, we remark that it is known that $J(B)^{2}$ is a zero ideal if and only if $|D| \le 2$, where $D$ is a defect group of $B$. 

\begin{thm}\label{Thm8} Let $B$ be a block of $kG$. Then $J(B)^{2}$ is a non-zero central ideal of $B$ if and only if $B$ has a 1-dimensional module and one of the following holds:
\begin{enumerate}
\item $G/O_{p'}(G)$ is an abelian $p$-group of order greater than $2$;
\item $p=2$ and $G/O_{2'}(G)$ is isomorphic to the alternating group of degree $4$;
\item $p=3$ and $G/O_{3'}(G)$ is isomorphic to the symmetric group of degree $3$.
\end{enumerate}
\end{thm}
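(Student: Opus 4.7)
\smallskip
\noindent\textit{Proof plan.}
For the backward direction, I would identify $B_0(kG)$ with $B_0(k\bar G)$ for $\bar G = G/O_{p'}(G)$ and verify the claim for $G = \bar G$ directly. In case~(1), $kP$ is commutative so $J(kP)$ is automatically central, and $J(kP)^2\neq 0$ is equivalent to $|P|>2$. In cases~(2) and~(3), $\bar G = P\rtimes H$ with $L(kP)=3$, so $J(kP)^2 = kP^+$; then $J(k\bar G)^2 = kP^+\cdot k\bar G$ is a non-zero central ideal, because $P^+$ is fixed by $H$-conjugation.

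For the forward direction I would apply Lemma~\ref{Lem2} and \cite[Proposition~4.1]{Ku} to conclude $B\cong B_0(kG)$ with full defect, and reduce to $O_{p'}(G) = 1$. Setting $P^* = P\cap G'$, Proposition~\ref{Prop4} with $n=2$ combined with the lower bound $L(k(P/P^*))\ge r(p-1)+1$ (recalled just after Proposition~\ref{Prop4}) leaves three sub-cases: (A) $P^* = 1$; (B) $P^* = P$; (C) $p = 2$, $|P/P^*| = 2$, and $P^*\neq 1$. In case~(A), the focal subgroup theorem and the transfer homomorphism force $G$ to be $p$-nilpotent with abelian $P$, and $O_{p'}(G) = 1$ yields $G = P$; the condition $J(kP)^2\neq 0$ then gives $|P|>2$, i.e., case~(1).

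I would handle (B) and (C) by induction on $|G|$ applied to $G'$; this is valid since Lemma~\ref{Lem5}(1) gives $J(b_0(kG'))^2$ central and $O_{p'}(G')\subseteq O_{p'}(G) = 1$. In case~(B), $|G:G'|$ is coprime to $p$, so Proposition~\ref{Prop6} together with Lemma~\ref{Lem5}(2) yields $L(b_0(kG')) = 3$ and $L(U)\le 2$ for every non-trivial principal indecomposable $U$ of $b_0(kG')$; in particular $J(b_0(kG'))^2\neq 0$, and induction identifies $G'$ with one of the three types. When $G'$ is an abelian $p$-group, $G' = P$ and $G = P\rtimes H$ by Schur--Zassenhaus with $H$ abelian of $p'$-order; centrality of $J(kP)^2\cdot kH$ in $kG$ forces $H$ to fix $J(kP)^2$ pointwise, and combined with $[P,H] = P$ and $J(kP)^2\neq 0$ this restricts $(P,H)$ to $(V_4, C_3)$ or $(C_3, C_2)$, giving (2) or (3). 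The sub-cases $G'\in\{A_4, S_3\}$ are vacuous: since $A_4$ and $S_3$ are not perfect and $G/G'$ is abelian, the only extensions with $|G/G'|$ prime to $|\mathrm{Out}(G')| = 2$ are direct products $G = G'\times K$, for which $[G,G] = [G',G']\neq G'$, a contradiction.

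The main obstacle is to show case~(C) is empty. Here Proposition~\ref{Prop6} only gives $L(U)\le 2$ for non-trivial principal indecomposables of $b_0(kG')$, and a direct argument covers $J(b_0(kG'))^2 = 0$ (which forces $G'$ to have a Sylow $2$-subgroup of order at most $2$, whence $G' = C_2$ and a short analysis shows $G$ must be a cyclic $2$-group with trivial derived subgroup, contradicting $P^*\neq 1$). Otherwise the induction makes $G'$ either an abelian $2$-group with $|G'|>2$ or $A_4$. If $G' = A_4$, then $G$ is an index-$2$ extension of $A_4$ so $G\in\{S_4, A_4\times C_2\}$; the case $G = S_4$ is ruled out because the non-trivial principal indecomposables of $kA_4$ all have Loewy length $3$, violating the bound from Proposition~\ref{Prop6}. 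If $G'$ is an abelian $2$-group, a Schur--Zassenhaus analysis of the extension $1\to G'\to G\to G/G'\to 1$ together with Lemma~\ref{Lem5}(1) applied to the preimage of the odd part of $G/G'$ forces the residual candidate $G = A_4\times C_2$; Lemma~\ref{Lem7} with $H = A_4$, $Q = C_2$, $n = 2$, $r = 1$ then requires $J(kA_4)$ to be central in $kA_4$, contradicting the non-abelianness of $A_4$. This exhausts case~(C) and completes the plan.
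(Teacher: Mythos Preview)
Your inductive strategy via Proposition~\ref{Prop4} is genuinely different from the paper's argument, but it has a gap in case~(B): the induction on $|G|$ applied to $G'$ breaks down when $G$ is perfect. Nothing in your setup excludes $G=G'$; you only know that $|G:G'|$ is coprime to $p$, which is vacuous when $G=G'$. In that situation Proposition~\ref{Prop6} still tells you that every non-trivial principal indecomposable of $B_0(kG)=b_0(kG')$ has Loewy length at most~$2$, and one can push further (the Cartan matrix of $B_0$ becomes diagonal, so $B_0$ is local, and then an argument with $p$-regular elements acting on a local block gives a contradiction), but this is a separate non-trivial step that your plan omits. The paper sidesteps the issue entirely: instead of inducting, it shows directly that every simple $B$-module is $1$-dimensional, using $eS\subseteq eJ^2\subseteq Z(B)$ for each primitive idempotent $e$; this yields $G'\subseteq P\trianglelefteq G$ at once, so $G$ is $p$-solvable and the perfect case never arises.

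There are also smaller problems. In case~(B) with $G'\in\{A_4,S_3\}$ you assert $|\mathrm{Out}(G')|=2$, but $\mathrm{Out}(S_3)=1$; and $A_4\times C_2$ cannot occur in the sub-case $G'=A_4$, since its derived subgroup is $V_4$, not $A_4$. Your case~(C) sketch ``Schur--Zassenhaus analysis \dots\ forces the residual candidate $G=A_4\times C_2$'' hides real work: one must apply the inductive hypothesis to the preimage $M$ of the odd part of $G/G'$ (verifying $J(b_0(kM))^2\neq 0$ and $O_{2'}(M)=1$), and then separately eliminate the sub-case where $M$ is an abelian $2$-group (which forces $G$ to be a $2$-group with $|G:G'|=2$, hence cyclic, contradicting $G'\neq 1$). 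All of this is repairable, but the resulting argument is considerably longer than the paper's; the direct route through ``all simples are $1$-dimensional'' followed by $J(kP)^4\subseteq J^4\subseteq (B\cdot(G')^+)^2=0$ (giving $L(kP)\le 4$ and $P$ abelian) is shorter and avoids the inductive pitfall altogether.
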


\begin{proof}
Set $\bar{G}=G/O_{p'}(G)$. If (1), (2) or (3) occurs, then $B \simeq k\bar{G}$. In (1), $0 \neq J(B)^{2} \subseteq B=Z(B)$. In (2) and (3), $L(B)=3$ and all the irreducible $B$-modules have $k$-dimensions 1. Since $\dim S(B) \cap Z(B)$ equals the number of irreducible $B$-modules and $\dim S(B)$ equals the sum of the $k$-dimensions of irreducible $B$-modules, we have $0 \neq J(B)^{2} \subseteq S(B) \subseteq Z(B)$. 

 In the following, put $J=J(B)$ and $S=S(B)$, and let $P$ be a Sylow $p$-subgroup of $G$ to prove the converse. We may assume that $B$ is the principal block of $kG$ by {\cite[Proposition 4.1]{Ku}} and that $G=\bar{G}$ since the canonical map $G \to \bar{G}$ induces an isomorphism between the principal blocks of $kG$ and $k\bar{G}$.  If $p$ does not divide $|G'|$, then $G$ is an abelian $p$-group and (1) holds. Thus we suppose $p$ divides $|G'|$. Let $e$ be a primitive idempotent of $B$. If $eJ^{2}=0$, then $eB$ is irreducible or $eB/eJ \simeq eS=eJ$ since $B$ is symmetric. Hence we have $|P| \le 2$, a contradiction. Thus $eJ^{2} \neq 0$ and $eS \subseteq eJ^{2} \subseteq Z(B)$. From {\cite[Remark 2.2 (ii)]{Ku}}, this implies the $k$-dimension of $eB/eJ \simeq eS$ is 1. Therefore all the irreducible $B$-modules are 1-dimensional and $G' \subseteq P \triangleleft G$ (see the first half of the proof of {\cite[Theorem 4.2]{Ku}}). In particular, $G$ is $p$-solvable and $B = kG$. Since $p$ divides $|G'|$ and $P \triangleleft G$, $J(kP)^{4} \subseteq J^{4} \subseteq (B \cdot (G')^{+})^{2}=0$ and this yields that $P$ is abelian (see {\cite[Proposition 3.1 and Corollary 3.8]{KKS}}) and that $G \neq P$. Hence we can write $P=G' \times C_{P}(H)$ and $G=PH=G'H \times C_{P}(H)$ for some non-trivial $p'$-subgroup $H$ of $G$ by the Schur-Zassenhaus theorem. We now suppose $G' \neq P$. Then $J(kC_{P}(H)) \neq 0$ and it follows from Lemma \ref{Lem7} that $J(k(G'H))$ is a non-zero central ideal in $k(G'H)$. Hence $G'H$ is a $p$-group by {\cite[Corollary 4.5]{Ku}}, but this is a contradiction. Therefore we deduce $G'=P$ and this means $L(kP)=3$ by Proposition \ref{Prop6}. Hence $P$ is a Klein four group and (2) holds, or $P$ is cyclic of order $3$ and (3) holds.  
\end{proof}

The following is a corollary of K\"{u}lshammer {\cite[Theorem 4.3]{Ku}} and Theorem \ref{Thm8}.

\begin{cor}\label{Cor9} Let $B$ be a block of $kG$. 
\begin{enumerate}
\item {\cite[Lemma 2.2]{W}} If $J(B)$ is commutative, then $J(B)^{2}$ is central.
\item $J(B)^{2}$ is central, but $J(B)$ is not commutative if and only if $B$ has a 1-dimensional module and one of the following holds:
\begin{enumerate}
\item $p=2$ and $G/O_{2'}(G)$ is isomorphic to the alternating group of degree $4$;
\item $p=3$ and $G/O_{3'}(G)$ is isomorphic to the symmetric group of degree $3$.
\end{enumerate}
\end{enumerate}
\end{cor}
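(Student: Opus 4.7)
The plan is to derive~(2) from Theorem~\ref{Thm8} after a single short observation; part~(1) is a verbatim quotation of \cite[Lemma 2.2]{W}.

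The observation is that if $J(B)^{2}=0$, then for any $x,y\in J(B)$ both $xy$ and $yx$ lie in $J(B)^{2}=0$, so $J(B)$ is automatically commutative. Hence the hypothesis ``$J(B)^{2}$ central and $J(B)$ not commutative'' already forces $J(B)^{2}$ to be a \emph{non-zero} central ideal, which is precisely the hypothesis of Theorem~\ref{Thm8}.

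For the forward direction, Theorem~\ref{Thm8} now gives that $B$ has a $1$-dimensional module and that one of its three cases holds. In case~(1), $\bar G:=G/O_{p'}(G)$ is an abelian $p$-group and the opening step of the proof of Theorem~\ref{Thm8} yields $B\cong k\bar G$, which is commutative---contradicting the non-commutativity of $J(B)$. Only cases~(2) and~(3) of Theorem~\ref{Thm8} can therefore occur, and these are exactly~(a) and~(b).

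For the converse, assume~(a) or~(b) and that $B$ has a $1$-dimensional module. Theorem~\ref{Thm8} immediately supplies centrality of $J(B)^{2}$, and the group $\bar G\in\{A_4,S_3\}$ has a unique $p$-block in either case (no non-linear irreducible character of $\bar G$ has degree divisible by $|\bar G|_p$, so there is no defect-zero block), whence $B\cong k\bar G$. It remains to verify that $J(k\bar G)$ is not commutative. Writing $\bar G=P\rtimes H$ with $P$ the normal abelian Sylow $p$-subgroup and $H\ne 1$ a $p'$-complement acting non-trivially on $P$, I would choose $u\in P\setminus\{1\}$ and $h\in H$ with $huh^{-1}\ne u$ and set $x:=1-u$, $y:=(1-u)h$; both lie in $J(k\bar G)$, and a one-line calculation gives
\[
xy-yx=(1-u)(huh^{-1}-u)h.
\]
The only technical point is that the $P$-factor $(1-u)(huh^{-1}-u)$ is non-zero in $kP$; this reduces to the tiny check that the group elements appearing are distinct and have non-cancelling coefficients in characteristic~$p$, which is immediate in both $kV_4$ and $kC_3$.
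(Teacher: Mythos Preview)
Your argument is correct and follows the same outline the paper intends: the paper gives no proof at all, merely recording that the corollary follows from K\"ulshammer \cite[Theorem 4.3]{Ku} together with Theorem~\ref{Thm8}. You reconstruct exactly this, except that instead of invoking \cite[Theorem 4.3]{Ku} (which characterizes when $J(B)$ is commutative) you argue directly: case~(1) of Theorem~\ref{Thm8} yields $B\cong k\bar G$ commutative, and in cases~(a),~(b) you exhibit an explicit commutator in $J(k\bar G)$. Your computation is right; in both instances $(1-u)(huh^{-1}-u)$ is (up to sign) the element $P^{+}$, which is non-zero.

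One small point: your justification that $k\bar G$ has a single $p$-block (``no defect-zero block'') is not a complete argument in general---absence of defect-zero blocks does not by itself force a unique block. Here it happens to suffice because in both $A_4$ (char~$2$) and $S_3$ (char~$3$) every simple module is $1$-dimensional and hence lies in the principal block; alternatively, you can simply quote the first line of the proof of Theorem~\ref{Thm8}, which already asserts $B\simeq k\bar G$ in cases~(2) and~(3).
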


In the last theorem below, we consider the blocks with the property that the cube of the Jacobson radical becomes a central ideal when $p$ is odd. Remark that the blocks with radical cube zero were classified in Okuyama \cite{O} (see also {\cite[Proposition 3.1]{KKS}}). 

\begin{thm}\label{Thm10} Assume $p \ge 3$ and let $B$ be a block of $kG$.Then $J(B)^{3}$ is a non-zero central ideal of $B$ if and only if $B$ has a 1-dimensional module and $G$ is a $p$-nilpotent group with an abelian Sylow $p$-subgroup of order greater than $4$.
\end{thm}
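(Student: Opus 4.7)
For the sufficiency, I would assume $G$ is $p$-nilpotent with abelian Sylow $p$-subgroup $P$ of order $>4$ and $B$ has a $1$-dimensional module. The plan is: by Lemma \ref{Lem2} and \cite[Proposition 4.1]{Ku}, $B\cong B_{0}(kG)$, and since $G=O_{p'}(G)\rtimes P$, the canonical surjection $kG\to k(G/O_{p'}(G))$ restricts to an algebra isomorphism $B_{0}(kG)\cong kP$. Commutativity of $kP$ makes $J(B)\cong J(kP)$ central, and $p\ge3$ with $|P|>4$ forces $L(kP)\ge4$ (the only abelian $p$-groups with Loewy length $\le 3$ for $p\ge 3$ have order $\le 3$); hence $J(B)^{3}$ is a non-zero central ideal.

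For the necessity, I would assume $J(B)^{3}$ is a non-zero central ideal. First, Lemma \ref{Lem2} and \cite[Proposition 4.1]{Ku} give $B\cong B_{0}(kG)$, and replacing $G$ by $G/O_{p'}(G)$ via the canonical isomorphism of principal blocks lets me assume $O_{p'}(G)=1$. Set $J=J(B)$, $S=S(B)$, and let $P$ be a Sylow $p$-subgroup. If $p\nmid|G'|$ then $G'\subseteq O_{p'}(G)=1$ makes $G$ abelian, so $G=P$ is an abelian $p$-group; $J^{3}\ne 0$ yields $L(kP)\ge 4$, excluding $|P|\le 3$ for $p\ge 3$ and giving $|P|>4$ as required.

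If $p\mid|G'|$, I aim at a contradiction. Following the template of Theorem \ref{Thm8}, for each primitive idempotent $e$ of $B$ one has $eJ^{3}=0$ or $eS\subseteq eJ^{3}\subseteq Z(B)$; the latter forces $\dim eB/eJ=1$ by \cite[Remark 2.2 (ii)]{Ku}. To rule out the former, I use that $eB\downarrow_{P}$ is a free $kP$-module (a summand of the $kP$-free module $kG\downarrow_{P}$) and $J(kP)\subseteq J(kG)$ gives $L(kP)=L_{P}(eB)\le L(eB)\le 3$; thus $|P|\le 3$, and both $|P|=1$ (so $J=0$) and $|P|=p=3$ (a cyclic-defect block with $L(B)\le 3$) contradict $J^{3}\ne 0$. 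Hence every simple $B$-module is $1$-dimensional, and the first half of the proof of \cite[Theorem 4.2]{Ku} gives $G'\subseteq P\triangleleft G$; $G$ is then $p$-solvable and $B=kG$ by Fong's theorem. Proposition \ref{Prop4} gives $L(k(P/G'))\le 3$, whence $|P/G'|\le 3$ (and in fact $|P/G'|=1$ for $p\ge 5$).

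If $P=G'$, Proposition \ref{Prop6}---applicable since $|G:G'|=|H|$ is a $p'$-number by Schur--Zassenhaus---forces $L(kP)=L(U_{0})=4$, which is impossible for $p\ge 3$ (cyclic $p$-groups give $L=p^{r}$; non-cyclic abelian or non-abelian $p$-groups with $p\ge 3$ give $L\ge 2p-1\ge 5$). Otherwise $p=3$ and $|P/G'|=3$; Proposition \ref{Prop6} still forces $L(kG')\le 4$, so $|G'|=3$, $|P|=9$, and $P$ is abelian. A Schur--Zassenhaus decomposition gives $P=G'\times C_{P}(H)$ with $|C_{P}(H)|=3$, whence $G=G'H\times C_{P}(H)$, and Lemma \ref{Lem7} (with $Q=C_{P}(H)$, $n=3$, $r=1$) makes $J(k(G'H))^{2}$ a non-zero central ideal of $k(G'H)$; Theorem \ref{Thm8} then forces $k(G'H)\cong kS_{3}$, and in $kG\cong kS_{3}\otimes kC_{3}$ the summand $J(kS_{3})\otimes J(kC_{3})^{2}$ of $J(kG)^{3}$ lies outside $Z(kG)=Z(kS_{3})\otimes kC_{3}$ (since $J(kS_{3})\not\subseteq Z(kS_{3})$), giving the final contradiction. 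The chief obstacle is the elimination of $eJ^{3}=0$, where the free-restriction comparison $L(kP)\le L(eB)$ takes the place of the ``$|P|\le 2$'' shortcut used in Theorem \ref{Thm8}; the rest is a careful application of Propositions \ref{Prop4} and \ref{Prop6}, Lemma \ref{Lem7}, Theorem \ref{Thm8}, and a small tensor-product computation.
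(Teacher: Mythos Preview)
Your sufficiency argument is fine. The necessity argument, however, has a genuine gap at the step where you rule out $eJ^{3}=0$: the inclusion $J(kP)\subseteq J(kG)$ that you invoke is false when $P$ is not normal in $G$. For instance, take $G=A_{5}$ and $p=3$ (so $O_{3'}(G)=1$): a generator $g$ of a Sylow $3$-subgroup acts non-trivially on the $4$-dimensional simple $kA_{5}$-module, whence $g-1\notin J(kA_{5})$. Without this inclusion the comparison $L(kP)=L_{P}(eB)\le L(eB)$ is unjustified, so you cannot conclude that every simple $B$-module is $1$-dimensional, and the subsequent chain $G'\subseteq P\triangleleft G$, $B=kG$ loses its foundation. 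At this point in the argument nothing is known about normality of $P$, and there is no evident elementary repair.

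The paper takes a different route that never needs all simples to be $1$-dimensional. It applies Proposition~\ref{Prop6} directly to the block of $kG'$ covered by $B$: if $p\nmid|G:G'|$, the projective cover $U_{0}$ of the trivial $kG'$-module would have $L(U_{0})=4$, which Koshitani's lower bound \cite[Theorem~1.2]{Ko} excludes for $p\ge 3$. Hence $p\mid|G:G'|$, and Proposition~\ref{Prop4} forces $p=3$, $|G:G'|_{3}=3$; a second appeal to \cite{Ko}, \cite{O} and Proposition~\ref{Prop6} gives $|G'|_{3}=3$, so $|P|=9$ and $P$ is abelian. Normality of $P$ is then obtained by passing to $O^{3'}(G)$ and invoking Fong's decomposition $O^{3'}(G)=L\times Q$ together with Lemma~\ref{Lem7} (with $r=2$) and \cite[Corollary~4.5]{Ku} to force $L=1$. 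The endgame is also simpler than yours: applying Lemma~\ref{Lem7} with $r=2$ (rather than $r=1$) makes $J(k(P^{*}H))$ itself central, so \cite[Corollary~4.5]{Ku} yields $|H|=1$ directly, without recourse to Theorem~\ref{Thm8} or a tensor computation.
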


\begin{proof}
We may assume that $B$ is the principal block of $kG$ by Lemma \ref{Lem2} and {\cite[Proposition 4.1]{Ku}}. Moreover, we may assume $O_{p'}(G)=1$ as mentioned in the proof of Theorem \ref{Thm8}. If $G$ is an abelian $p$-group of order greater that $4$, then $B=kG$ and $0 \neq J(B)^{3} \subseteq B=Z(B)$. In order to prove the converse, assume $0 \neq J(B)^{3} \subseteq Z(B)$. In the following, we suppose $p$ divides $|G'|$ and deduce a contradiction. By {\cite[Theorem 1.2]{Ko}} and Proposition \ref{Prop6}, $|G:G'|$ is divisible by $p$. Hence we have from Proposition \ref{Prop4} that $p=3$ and $|G:G'|_{3}=3$. Moreover, $|G'|_{3}=3$ by \cite{O} (see again {\cite[Theorem 1.2]{Ko}} and Proposition \ref{Prop6}). Hence $G$ has an abelian Sylow $3$-subgroup $P$ of order $9$. Let $b$ be the principal block of $kO^{3'}(G)$. From Lemma \ref{Lem5}, $J(b)^{3}$ is a non-zero central ideal in $b$. By Fong \cite{F} (see {\cite[Proposition 4.3]{KY}}), we can write $O^{3'}(G)=L \times Q$, where $L$ is a direct product of some simple groups such that $L=L'$ and $Q$ is a $3$-group. By {\cite[Theorem 1.2]{Ko}} and Proposition \ref{Prop6}, we have $|Q| \neq 1$. Hence $J(kQ)^{2} \neq 0$ and this implies $|L|=1$ by Lemma \ref{Lem7} and {\cite[Corollary 4.5]{Ku}}. Therefore $P$ is normal in $G$ and $B = kG$. By the Schur-Zassenhaus theorem, we can write $P=P^{*} \times C_{P}(H)$ and $G=PH=P^{*}H \times C_{P}(H)$ for some $p'$-subgroup $H$ of $G$, where $P^{*}:=P \cap G'$.As proved above, $P^{*} \neq P$. Hence $J(kC_{P}(H))^{2} \neq 0$ and it follows from Lemma \ref{Lem7} that $J(k(P^{*}H))$ is a non-zero central ideal. Thus $|H|=1$ by {\cite[Corollary 4.5]{Ku}}, but this is a contradiction. Therefore we deduce that $|G'|$ is not divisible by $p$, $G$ is an abelian $p$-group and the claim follows. 
\end{proof}

 If $p=2$ and $J(B)^{3}$ is a non-zero central ideal, then we can give an upper bound of the order of a Sylow $2$-subgroup of $G$ by the same way above, but can not determine the structure of $G$. We don't know if $G$ is $2$-solvable.

\end{document}